\documentclass[11pt,a4paper]{amsart}

\usepackage{amssymb,mathrsfs,enumitem,amsmath,amsthm,bbold}
\usepackage[mathscr]{euscript}
\usepackage{color}

\usepackage[all]{xy}
\definecolor{Chocolat}{rgb}{0.36, 0.2, 0.09}
\definecolor{BleuTresFonce}{rgb}{0.215, 0.215, 0.36}
\definecolor{EgyptianBlue}{rgb}{0.06, 0.2, 0.65}

\usepackage[OT2,OT1]{fontenc}

\usepackage[backend=bibtex,
    sorting=anyt,
    isbn=false,
    url=false,
    doi=false,
    maxbibnames=100
    ]{biblatex}
\usepackage{fourier}

\usepackage[colorlinks,final,hyperindex]{hyperref}
\hypersetup{citecolor=BleuTresFonce, urlcolor=EgyptianBlue, linkcolor=Chocolat}

\newtheorem{theorem}{Theorem}[section]

\newtheorem{proposition}[theorem]{Proposition}

\theoremstyle{definition}

\DeclareMathAlphabet{\pazocal}{OMS}{zplm}{m}{n}

\def\calO{\pazocal{O}}

\def\d{\mathrm{d}}

\DeclareMathAlphabet{\mathbbold}{U}{bbold}{m}{n}

\def\k{\mathbbold{k}}

\begin{document}

\title[A Nielsen--Schreier variety of algebras without the PBW property]{A Nielsen--Schreier variety of algebras\\ without the Poincaré--Birkhoff-Witt property}
\author{Vladimir Dotsenko}
\address{Institut de Recherche Math\'ematique Avanc\'ee, UMR 7501, Universit\'e de Strasbourg et CNRS, 7 rue Ren\'e-Descartes, 67000 Strasbourg, France}
\address{Institute of Mathematics and Mathematical Modeling, Pushkin St. 125, 050010 Almaty, Kazakhstan}
\email{vdotsenko@unistra.fr}

\author{Bekzat Zhakhayev}
\address{
SDU University, Abylaikhan St. 1/1,  040900, Kaskelen, Kazakhstan}
\address{Institute of Mathematics and Mathematical Modeling, Pushkin St. 125, 050010 Almaty, Kazakhstan}
\email{bekzat.kopzhasar@gmail.com}

\begin{abstract}
We prove the Nielsen--Schreier property for the variety of algebras defined by the identity $x(x^2)^2=(x^2)^2x$: every subalgebra of every free algebra in this variety is itself free. We also show that this variety of algebras does not have the Poincaré--Birkhoff--Witt property for universal multiplicative enveloping algebras. Our strategy of proof actually leads to infinitely many new varieties of non-associative algebras with the same behaviour. This offers new evidence supporting a conjecture of the first author and Umirbaev suggesting that the Nielsen--Schreier property over a field of zero characteristic is equivalent to freeness of universal multiplicative enveloping algebras of free algebras.
\end{abstract}

\maketitle

\section{Introduction}

All algebras satisfying a given system of identities are said to form a Nielsen--Schreier variety of algebras if every subalgebra of every free algebra of this type is free. Five Nielsen--Schreier varieties of algebras with one bilinear operation have been known from classical results in ring theory going back to the 1950s: all non-associative algebras (Kurosh \cite{MR0020986}), all Lie algebras (Shirshov and Witt \cite{MR0059892,MR77525}), all commutative or anticommutative non-associative algebras (Shirshov \cite{MR0062112}), and the trivial example of all algebras with zero product. In the 1990s, Umirbaev \cite{MR1302528} made an impressive breakthrough: he used universal multiplicative enveloping algebras and modules of Kähler differentials to establish a necessary and sufficient condition for a variety of algebras over a field of zero characteristic to have the Nielsen--Schreier property. However, the condition he gave is generally very hard to check, and consequently this criterion has seen rather limited use. 

Several years ago, the first author and Umirbaev \cite{MR4675074} used the theory of operads to establish an effective sufficient condition for a variety of algebras over a field of zero characteristic to have the Nielsen--Schreier property. At the time, it was not clear whether this condition was also necessary. The goal of this paper is to show that it is not the case. Our strategy relies on an observation that the condition of \cite[Th.~4.1]{MR4675074} implies, in particular, that our variety of algebras has the Poincaré--Birkhoff--Witt property for universal multiplicative enveloping algebras \cite{MR4381941}, and so it is enough to exhibit one Nielsen--Schreier variety that does not have the latter property. 

Our example is the variety $\mathfrak{M}$ of non-associative algebras defined by the rather innocent identity
 \[
x(x^2)^2=(x^2)^2x .    
 \]
In particular, we establish the validity for this variety of the property that is prominently featured in both \cite{MR1302528} and \cite{MR4675074}: the universal multiplicative enveloping algebra of every free $\mathfrak{M}$-algebra is free. Overall, our work should be viewed as supporting evidence in favour of \cite[Conjecture 4.8]{MR4675074}: we believe that the Nielsen--Schreier property over a field of zero characteristic is in fact equivalent to freeness of universal multiplicative enveloping algebras of free algebras. 

\subsection*{Conventions}

All vector spaces in this paper are defined over a field $\k$ of zero characteristic. We do not burden this short note with extensive recollections: the reader is expected to have certain knowledge of the language of algebraic operads \cite{MR2954392}, Gröbner bases for operads \cite{MR3642294}, and universal multiplicative enveloping algebras and Kähler differentials of algebras over operads \cite{MR2494775}. A reader in need of a short summary of how to use all those objects for the purposes of ring theory is invited to consult a concise introduction in \cite[Sec.~2]{MR4675074}.

\section{The Nielsen--Schreier property of the variety \texorpdfstring{$\mathfrak{M}$}{M}}

Our main result is the following theorem alluded to in the introduction.

\begin{theorem}\label{th:main}
Consider the variety $\mathfrak{M}$ of all non-associative algebras satisfying the identity
 \[
x(x^2)^2=(x^2)^2x.     
 \]
This variety is Nielsen--Schreier, and it does not have the Poincaré--Birkhoff--Witt property for universal multiplicative enveloping algebras. 
\end{theorem}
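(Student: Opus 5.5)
We split the statement into its two halves, working with the operad $\calO$ whose algebras form $\mathfrak{M}$. Since $\k$ has characteristic zero, the identity $x(x^2)^2=(x^2)^2x$ is equivalent to its full multilinearization, a single relation of arity $5$. So $\calO$ is the quotient of the free operad on one binary operation (no symmetry) by the $S_5$-submodule generated by this multilinear relation.

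\emph{Nielsen--Schreier property.} We use Umirbaev's criterion \cite{MR1302528} in the form recalled in \cite[Sec.~2]{MR4675074}. A variety over a field of characteristic zero is Nielsen--Schreier once two things hold: the universal multiplicative enveloping algebra $U_\calO(F(V))$ of every free algebra $F(V)$ is a free associative algebra, and the module of Kähler differentials $\Omega_{F(V)}$ is a free $U_\calO(F(V))$-module (it is generated by $\d v$, $v\in V$). So the first step is to describe $U_\calO(F(V))$ explicitly. It is generated by the operators $L_a$ and $R_a$ for $a\in F(V)$, subject to the relations obtained by differentiating the multilinearized identity, i.e. by marking one of the five slots as the ``module slot''. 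Our plan is to find a Gröbner basis for the operad $\calO$ for a well-chosen ordering of tree monomials (a path-lexicographic order on shuffle trees). From it we would read off a PBW-type normal form for $F(V)$ and, simultaneously, for the one-pointed trees describing $U_\calO(F(V))$. The desired conclusion is that the normal forms of one-pointed trees factor uniquely as words in a set of ``letters'' $L_a,R_a$ with $a$ running over a basis of $F(V)$, together with certain extra generators of length $\geq 2$. That unique factorization would show that $U_\calO(F(V))$ is free on the set of indecomposable normal forms. Freeness of $\Omega_{F(V)}$ on the $\d v$ is then formal, because $F(V)$ is free.

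\emph{Main obstacle.} The hardest step is the Gröbner basis computation and proving that the relevant indecomposables freely generate. For an arity $5$ relation the leading terms may overlap, and the Gröbner basis may be infinite. We would first try an ordering in which the leading term of the relation is a tree that has only \emph{trivial} overlaps with itself, so that the relation alone is already a Gröbner basis. One way to get this is to make the leading monomial one in which the marked element $x$ sits at the top level: the relation then rewrites $x\cdot(x^2)^2$-type trees. If no such ordering works, we would compute the small S-polynomials (arities $6$ to $9$) and look for a stable pattern. This pattern is presumably the source of the ``infinitely many new varieties'' mentioned in the abstract.

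\emph{Failure of PBW.} By \cite{MR4381941}, the PBW property means that $U_\calO(A)$ carries a filtration whose associated graded is $U_\calO(F(A))$-like, functorially in $A$. Equivalently, it means that the left $\calO$-module $\calO'$ (the derivative operad giving the enveloping algebras) is free as a right $\calO$-module. To refute it, we would compare generating series. Freeness would force the series of $\calO'$ to equal the composition of the series of $\calO$ with the series of a free generating collection, which must have nonnegative coefficients. We compute the first coefficients of $\calO$ from the Gröbner basis (or directly via the $S_5$-module generated by the relation). We then solve for the putative generators and exhibit a negative coefficient, or an $S_n$-character that is not a genuine representation, in low arity (likely arity $5$ or $6$). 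That contradiction shows the PBW property fails. This last step is a finite computation once the dimensions are known.
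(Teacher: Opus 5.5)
Both halves of the proposal have a genuine gap.

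\textbf{Nielsen--Schreier.} The criterion you invoke is not a theorem. For a free algebra $F$ in any variety, $\Omega^1_{\mathfrak M}(F)$ is free on the $\d v$, as you note yourself. So your two conditions collapse to ``$U_{\mathfrak M}(F)$ is a free associative algebra''. The claim that this alone implies the Nielsen--Schreier property is precisely \cite[Conjecture 4.8]{MR4675074}, which is open; the paper only offers evidence for it. Any usable form of Umirbaev's criterion must control how a subalgebra sits inside the free algebra, and that is where the work lies. The missing chain is as follows.
\begin{enumerate}
\item By \cite[Lemma 4.6]{MR4675074} it suffices to show that $H_n=\ker(F_{\mathfrak M}(y,x_1,\ldots,x_n)\to\k y)$ is free.
\item One proves that $U_{\mathfrak M}(F_n)$ is a free left $U_{\mathfrak M}(H_n)$-module. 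The paper takes a basis $\mathsf B_n$ closed under $\circ$ and generators $\alpha_b,\beta_b$, ordered so that $\alpha_y<\beta_y$ are the largest letters. The relations then have leading terms $\beta_{b_1}\alpha_{b_2\circ b_3}\alpha_{b_4}$ with $b_1=y$ whenever $y$ occurs, no overlaps are possible, and normal forms factor uniquely as $N_n'\cdot N_n^y$.
\item Cohn's theorem \cite[Th.~6.6]{MR800091} then makes $U_{\mathfrak M}(H_n)$ free, so submodules of free $U_{\mathfrak M}(H_n)$-modules are free.
\item Umirbaev's Kähler differential argument with the Euler formula shows that a minimal generating subset of the basis of $H_n$ is free.
\end{enumerate}
For freeness of $U_{\mathfrak M}(F)$ itself, your instinct (a leading term with a leaf adjacent to the root) is right, but you should pass to $\circ,\bullet$. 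The relation becomes $\sum_{\sigma}a_{\sigma(1)}\bullet((a_{\sigma(2)}\circ a_{\sigma(3)})\circ(a_{\sigma(4)}\circ a_{\sigma(5)}))$. Its root is the only $\bullet$-vertex, so there are no self-overlaps, and \cite[Lemma 4.2]{MR4675074} gives freeness at once.

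\textbf{Failure of PBW.} The positivity test you propose does not detect the failure. The relation is a Gröbner basis, and $\calO$ agrees with the magmatic operad $\mathrm{Mag}$ up to arity $4$. Moreover $\dim\calO(5)=1680-1$ and $\dim\calO(6)=30240-42$. Solving $f'(t)=g(f(t))$ gives
\[
g(s)=\frac{1}{1-2s}-\frac{s^4}{24}-\frac{s^5}{6}+O(s^6).
\]
So the putative generators have dimensions $383$ and $3820$ in arities $4$ and $5$, and in arity $4$ the character $16\,\k S_4-\mathrm{triv}$ is a genuine representation. The corrections are tiny compared with $2^n n!$, so no negative coefficient should be expected.

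The missing idea is that a free right $\calO$-module is free on its indecomposables $\partial(\calO)\circ_{\calO}\mathbbold{1}$, and these can be computed directly. Whichever leaf is marked, every term of the relation has a non-trivial subtree hanging off the path to the marked leaf. Hence the indecomposables coincide with those of $\mathrm{Mag}$: left combs, of dimension $2^n n!$, so $384$ rather than $383$ in arity $4$. Freeness would then force $\dim\partial(\calO)(4)=\dim\partial(\mathrm{Mag})(4)=1680$, contradicting $1679$.

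That repaired count is a legitimate alternative to the paper's route. The paper instead proves that PBW holds if and only if all leading terms of the reduced Gröbner basis for the graded path-lexicographic ordering are left combs. It then notes that the leading term $((a_1\circ a_2)\circ(a_3\circ a_4))\bullet a_5$ is not a left comb.
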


Our strategy is to apply the following result proved by the first author and Umirbaev, generalising the clever trick that Shirshov used in \cite{MR0059892} to establish that subalgebras of free Lie algebras are free. 

\begin{proposition}[{\cite[Lemma 4.6]{MR4675074}}]
Let $H_n$ be the kernel of the homomorphism from the free algebra $F_n:=F_\mathfrak{M}(y,x_1,\ldots,x_n)$ to the one-dimensional vector space spanned by $y$, viewed as an $\mathfrak{M}$-algebra with zero structure operations. If the $\mathfrak{M}$-algebra $H_n$ is free for each $n\ge 0$, the variety $\mathfrak{M}$ has the Nielsen--Schreier property.
\end{proposition}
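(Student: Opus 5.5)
The plan is to imitate Shirshov's elimination argument for Lie algebras. We will run an induction in which a subalgebra of a free algebra is repeatedly replaced by a subalgebra of the free algebra $H_n$, and we will use a grading to guarantee that the process terminates.

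\textbf{Reductions.} Every subalgebra of a free algebra lies in the union of the subalgebras generated by its finite subsets. Every finite subset lies in a free subalgebra on finitely many generators. Free algebras over an operad are characterised homologically: $A$ is free if and only if its operadic homology vanishes in degrees $\ge 2$, which in characteristic zero can be phrased via Kähler differentials being free over the universal multiplicative enveloping algebra. This condition is stable under filtered colimits. So it suffices to show that every finitely generated subalgebra $A\subset F_\mathfrak{M}(x_0,\ldots,x_n)$ is free. Next, filter $F$ by total degree and pass to the associated graded. A standard argument on minimal generating sets (lifting a minimal homogeneous generating set of $\mathrm{gr}\,A$) reduces us to the case where $A$ is generated by finitely many homogeneous elements. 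These are ordered by degree, and we may take the set to be \emph{reduced}: no generator lies in the subalgebra generated by the others, nor in that subalgebra plus lower terms.

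\textbf{Elimination step.} The identity $x(x^2)^2=(x^2)^2x$ is multilinear-homogeneous, so all the objects above are graded, and we argue by induction on the pair (number $n+1$ of free generators, sum of degrees of the generators of $A$). There are two cases.

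\emph{Case 1: some generator of $A$ has a nonzero linear part.} After a linear change of the free generators we may call this generator $y$, so that $y\in A$ is a free generator of $F=F_n$ in the notation of the Proposition. Then $F=\k y\oplus H_n$ and $A=\k y\oplus (A\cap H_n)$, since $H_n$ is exactly the kernel of the projection onto $\k y$. By hypothesis $H_n$ is free on some set $Z$, whose elements, apart from the $x_i$ themselves, have degree $\ge 2$ in the original grading. The subalgebra $A\cap H_n$ of $F_\mathfrak{M}(Z)$ is generated by elements obtained by rewriting the remaining generators of $A$ in terms of $Z$; in doing so the generator $y$ has been eliminated, which decreases the relevant complexity. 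By induction $A\cap H_n$ is free on some set $W$. It remains to check that $A$ is then free on $\{y\}\cup W$, and for this we again use the freeness of $H_n$ on $Z$: a relation among $y$ and $W$ in $F$ would project to a relation inside $H_n$.

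\emph{Case 2: all generators of $A$ lie in $F^{\ge 2}$.} In this case $A\subset H_n$ for any choice of $y$. We rewrite $A$ inside $F_\mathfrak{M}(Z)$, where its generators have strictly smaller degree measured in $Z$ (each element of $Z$ other than the $x_i$ has degree $\ge 2$), and then apply induction.

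\textbf{Main obstacle.} I expect the hardest part to be making the induction well-founded. In passing from $F_n$ to $H_n\cong F_\mathfrak{M}(Z)$ the set $Z$ is infinite, so ``number of generators'' does not decrease naively. We must instead track a suitable multidegree (degree in $y$ versus the $x_i$) and show that some weight strictly drops. The first thing I would try is the lexicographic weight (total degree in the $x_i$, then degree in $y$). One also has to verify that this weight behaves well under the rewriting into $Z$. The second delicate point is the claim in Case 1 that $A$ is free on $\{y\}\cup W$. For this I would compare $A$ with the free product $\k y * (A\cap H_n)$ using the same decomposition of $F_\mathfrak{M}(y,\ldots)$ that defines $H_n$, applied to the free algebra on $\{y\}\cup W$.
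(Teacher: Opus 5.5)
The paper gives no proof of this statement. It quotes it from \cite[Lemma~4.6]{MR4675074}, describing it as a generalisation of Shirshov's trick, so your outline has to stand on its own. It does not: Case~1 is wrong. The intersection $A\cap H_n$ is the kernel of $A\to\k y$, that is, the span of all monomials in the generators other than $y$ itself. It contains $y\cdot y$, $s\cdot y$, $(s\cdot y)\cdot y$, and so on, and it is not the subalgebra generated by the remaining generators. For $A=F_n$ it is all of $H_n$, which is infinitely generated, so nothing has been eliminated and no complexity has dropped. Moreover, $A$ is never free on $\{y\}\cup W$. Since $y\cdot y\in A\cap H_n$, we have $y\cdot y=p(W)$ for some polynomial $p$, and $y\cdot y-p(W)=0$ is a nontrivial relation: $y\cdot y\neq 0$, and it cannot cancel against terms of $y$-degree zero. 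Case~2 also fails as stated. Generators not involving $y$ keep their degree when rewritten in $Z$. A homogeneous but not multihomogeneous generator such as $x_1x_2+y\cdot y$ has top $Z$-component $x_1x_2$ of the same degree. After rewriting, the generators are no longer homogeneous for the $Z$-grading, so your induction hypothesis does not apply to them. Your opening reduction is invalid too. Vanishing of operadic homology characterises freeness only for connected graded algebras (a filtered union of free algebras can satisfy $A=A^2$), and the associated graded of a finitely generated subalgebra need not be finitely generated.

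The missing idea is that you should never eliminate a variable that lies in $A$. The argument can be set up as follows.

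First pass to homogeneous subalgebras. A relation involves only finitely many elements of a minimal homogeneous generating set, and these form a minimal homogeneous generating set of the subalgebra they generate. This reduces you to a finite such set $S$ in a finitely generated free algebra. A second passage to leading components, for a deglex order on multidegrees, makes $S$ multihomogeneous while preserving minimality, the degree sum, and non-freeness. In particular, degree-one elements of $S$ are then multiples of variables. Now induct on $\sum_{s\in S}\deg s$ alone.

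If every variable occurring in $S$ belongs to $S$, then $A$ is generated by variables and is free. Otherwise choose a variable $y$ occurring in $S$ but not in $S$. Then $S\subseteq H_n=F_\mathfrak{M}(Z)$, with $Z$ a minimal multihomogeneous generating set. All elements of $Z$ have degree $\ge 1$, and those involving $y$ have degree $\ge 2$. Hence the $Z$-length of each $s\in S$ is at most $\deg s$, and strictly smaller for those $s$ involving $y$, of which there is at least one.

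A Kurosh-type reduction in $F_\mathfrak{M}(Z)$ then finishes the step: replace any generator whose top $Z$-component is a polynomial in the top components of the others by the corresponding difference. This yields a generating set of $A$ whose top components form a finite minimal homogeneous set in finitely many $z$'s, with strictly smaller degree sum. That set is still not free if $S$ was not, because a minimal homogeneous generating set of a free graded algebra is free. The number of free generators never enters, which is exactly what removes the obstacle you identified.
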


We shall prove the freeness of the $\mathfrak{M}$-algebra $H_n$ in several steps. 

\begin{proposition}\label{prop:UEfree}
For every free $\mathfrak{M}$-algebra $A$, its universal multiplicative enveloping algebra $U_\mathfrak{M}(A)$ is a free associative algebra.
\end{proposition}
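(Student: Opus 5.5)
The plan is to use the standard description of the universal multiplicative enveloping algebra of a free algebra over an operad. For $A=F_\mathfrak{M}(V)$ one has $U_\mathfrak{M}(A)\cong \bigoplus_{n} \calO'(n)\otimes_{S_n} V^{\otimes n}$, where $\calO$ is the operad of $\mathfrak{M}$-algebras and $\calO'$ is its derivative (one distinguished "module" slot). Multiplication in $U_\mathfrak{M}(A)$ is insertion into the distinguished slot. By the approach of \cite{MR4675074}, it suffices to prove that the associative algebra structure on $\calO'$ is free, compatibly with the $S$-module structure. This means exhibiting an $S$-module $\mathcal{X}\subset\calO'$ of "indecomposable" unary operations such that every element of $\calO'(n)$ is uniquely a composite of such generators along the distinguished slot. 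Freeness of $U_\mathfrak{M}(A)$ then follows by applying the Schur functor to this free associative structure: we get $U_\mathfrak{M}(A)\cong T(\bigoplus_n \mathcal{X}(n)\otimes_{S_n}V^{\otimes n})$.

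To get at $\calO'$, we first compute a Gröbner basis of the operad $\calO$. The defining identity is multilinear of degree $5$ after full polarization. We choose a monomial ordering for nonassociative tree monomials in which the leading terms of the polarized relations $x(x^2)^2-(x^2)^2x$ are monomials whose root is the product of a single leaf with a subtree of arity $4$, say $a_i\cdot((a_ja_k)(a_la_m))$. We then run Buchberger's algorithm, checking the S-polynomials (small common multiples of arity $\le 9$). Ideally the polarized relations already form a Gröbner basis, or become one after adding finitely many explicit elements.

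With a Gröbner basis in hand, normal forms of $\calO$ are tree monomials avoiding the leading terms. We would then read off normal forms of $\calO'$: mark one leaf, and look at the path from the marked leaf to the root. A normal monomial in $\calO'$ factors uniquely as a sequence of "one-step" operations along this path. Each step is left or right multiplication by a normal subtree, and the factorisation is free exactly when no leading term straddles two consecutive steps of the path. The leading terms have the leaf directly under the root, so they involve only one step in which the marked leaf sits either as the single leaf $a_i$ or inside the arity-$4$ subtree. The latter case does straddle steps. I would handle this by enlarging the generators: declare the generators to be the maximal path-segments whose shapes are not decomposable because of the forbidden pattern, for example operators $L_{(a_ja_k)(a_la_m)}$ restricted appropriately. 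We then check that what remains is a free monoid of path words.

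The main obstacle is the Gröbner basis computation together with choosing an ordering for which the normal forms of $\calO'$ are visibly free. The latter amounts to saying the leading terms never create overlaps along the spine. If the direct ordering fails, I would first try a path-lexicographic ordering that privileges the spine. Failing that, I would compare dimensions: compute generating series of $\calO$ in low arities and verify that $\calO'$ has the Hilbert series $1/(1-g(t))$ for a series $g$ with nonnegative coefficients, then upgrade this to a free generating set by a filtration argument.
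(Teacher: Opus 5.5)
The overall framework you describe (the derivative $\partial\calO$ as a twisted associative algebra, with freeness read off from normal forms factored along the path from the distinguished leaf to the root) is the one behind \cite[Lemma~4.2]{MR4675074}, which is what the paper invokes. The decisive step, however, is missing. You never exhibit a Gr\"obner basis; you only hope the polarized relation ``ideally'' is one, and in the presentation you chose it is not.

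The missing idea is to replace the product by $a\circ b=\frac12(ab+ba)$ and $a\bullet b=\frac12(ab-ba)$. The identity then becomes $\sum_{\sigma\in S_5}a_{\sigma(1)}\bullet((a_{\sigma(2)}\circ a_{\sigma(3)})\circ(a_{\sigma(4)}\circ a_{\sigma(5)}))=0$. Every monomial in it has $\bullet$ at the root and $\circ$ at the other three vertices. For the reverse graded path-lexicographic ordering its leading term is $a_1\bullet((a_2\circ a_3)\circ(a_4\circ a_5))$. This term cannot overlap itself, because its root is its only $\bullet$-vertex. So the single relation is already a Gr\"obner basis, and its minimal leaf is adjacent to the root, which is all Lemma~4.2 needs.

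In the original product (shuffle generators $a_1a_2$ and $a_2a_1$), a path-lexicographic ordering putting $a_1$ next to the root gives a leading term of shape $a_1((a_2a_3)(a_4a_5))$ with all four vertices carrying the same generator. This term does overlap itself in arity $7$: take a second copy rooted at the middle vertex, with its own middle vertex at the lower right vertex of the first copy. The two leading terms have the same number of occurrences among arity-$7$ monomials, but only yours can occur twice in one monomial. Hence more arity-$7$ monomials avoid your leading term than avoid the $\circ/\bullet$ one, that is, more than $\dim\calO(7)$. So your relation is not a Gr\"obner basis, and Buchberger's algorithm will produce new elements whose leading terms you do not control.

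Your handling of the freeness criterion is also off. In the shuffle setting the distinguished slot of $\partial\calO$ is the minimal leaf. The reverse graded path-lexicographic ordering favours the shortest path from leaf $1$ to the root, so a leading term with the minimal leaf at the root occupies a single step of the spine. The case ``marked leaf inside the arity-$4$ subtree'' never arises.

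Your proposed remedy for that case would not work anyway. A leading term genuinely straddling two spine steps is a relation between products of generators, and enlarging the generating set does not remove it.

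Finally, the Hilbert series fallback is not an argument. Nonnegative coefficients of $g$ in $1/(1-g)$, checked in finitely many arities, do not imply freeness.
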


\begin{proof}
Note that the identity $x(x^2)^2=(x^2)^2x$ is equivalent, over a field of zero characteristic, to the multilinear identity 
 \[
\sum_{\sigma\in S_5} a_{\sigma(1)}((a_{\sigma(2)}a_{\sigma(3)})(a_{\sigma(4)}a_{\sigma(5)}))=\sum_{\sigma\in S_5} ((a_{\sigma(1)}a_{\sigma(2)})(a_{\sigma(3)}a_{\sigma(4)}))a_{\sigma(5)}.    
 \]
It will be convenient to consider a different description of our variety, using, instead of one binary operation $a,b\mapsto ab$, two operations
 \[
a_1\circ a_2=\frac12(a_1a_2+a_2a_1), \quad a_1\bullet a_2=\frac12(a_1a_2-a_2a_1),
 \]
the first of which is commutative, and the second of which is anticommutative. In terms of these operations, our identity becomes
 \[
\sum_{\sigma\in S_5} a_{\sigma(1)}\bullet((a_{\sigma(2)}\circ a_{\sigma(3)})\circ (a_{\sigma(4)}\circ a_{\sigma(5)}))=0.
 \]

In \cite[Lemma~4.2]{MR4675074}, it is established that universal multiplicative enveloping algebras of $\mathfrak{M}$-algebras are free as associative algebras if and only if for the reduced Gr\"obner basis of the corresponding shuffle operad $\calO^f$ with respect to the reverse graded path-lexicographic ordering, each leading term has the minimal leaf directly connected to the root.

If we consider the reverse graded path-lexicographic ordering (say, for a choice of a total order of generators given by $\circ<\bullet$), the leading term of our identity is the element 
 \[
a_{1}\bullet((a_{2}\circ a_{3})\circ (a_{4}\circ a_{5})),     
 \]
or, in the form of a tree monomial,
 \[
\begin{picture}(40,80)
\put(0,-2){\circle*{5}} \put(58,18){\circle{1}}
\put(-16,58){\circle{1}} \put(16,58){\circle{1}}
\put(24,58){\circle{1}} \put(56,58){\circle{1}}
\put(20,20){\circle{5}} 
\put(0,40){\circle{5}} \put(40,40){\circle{5}} 
\put(0,0){\line(-1,1){18}} \put(0,0){\line(1,1){18}}
\put(20,22){\line(-1,1){18}} \put(20,22){\line(1,1){18}}
\put(0,42){\line(-1,1){16}} \put(0,42){\line(1,1){16}}
\put(40,42){\line(-1,1){16}} \put(40,42){\line(1,1){16}}
\put(-20,62){\text{2}} \put(12,62){\text{3}}
\put(22,62){\text{4}} \put(54,62){\text{5}} \put(-22,20){\text{1}}
\end{picture} 
 \]
Indeed, this element has the shortest path to the root to the leaf labelled $1$, which makes it larger than any other element with the leaf labelled $1$ not adjacent to the root, and then among the elements which have the leaf labelled $1$ adjacent to the root, the sequence of leaf labels in planar order is the smallest lexicographically. 

By a direct inspection, this element has no self-overlaps, since the root vertex is the only vertex carrying the label $\bullet$. Thus, the defining  relation of the shuffle operad $\calO^f$ forms a Gröbner basis for our ordering. Its leading term has the minimal leaf directly connected to the root, so our assertion follows.
\end{proof}

Our next step is as follows. 

\begin{proposition}
For every $n\ge 0$, the algebra $U_{\mathfrak{M}}(F_n)$ is a free left $U_{\mathfrak{M}}(H_n)$-module.
\end{proposition}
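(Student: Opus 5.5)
Both enveloping algebras are free associative algebras by Proposition~\ref{prop:UEfree}, so I will deduce the statement from the structure of free associative algebras: a subalgebra of a free associative algebra which is itself free and which is ``large enough'' makes the ambient algebra a free module over it. The concrete plan is to describe $U_{\mathfrak{M}}(F_n)$ and the image of $U_{\mathfrak{M}}(H_n)$ through explicit monomial bases coming from the Gröbner basis found in the proof of Proposition~\ref{prop:UEfree}, and then to check that the inclusion is that of a ``Schreier-type'' subalgebra.

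First, I will recall from \cite[Sec.~2]{MR4675074} that $U_{\mathfrak{M}}(A)$ for a free algebra $A=F_{\mathfrak{M}}(V)$ has a basis of words in the generators $L_m,R_m$-type operators, and that the freeness result gives free generators indexed by tree monomials of $\calO^f$ which are normal (not divisible by the leading term $a_1\bullet((a_2\circ a_3)\circ(a_4\circ a_5))$) and in which the minimal leaf is adjacent to the root, with the remaining subtrees replaced by elements of a basis of $A$. Concretely, $U_{\mathfrak{M}}(F_n)$ is freely generated by elements $\mu(-,b_1,\dots,b_k)$ where $\mu$ runs over normal ``root-minimal'' operations and $b_i$ over normal monomials of $F_n$. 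Second, I will use that $H_n$ is spanned by all normal monomials of $F_n$ except $y$ itself, so that $F_n=\k y\oplus H_n$. The homomorphism $U_{\mathfrak{M}}(H_n)\to U_{\mathfrak{M}}(F_n)$ sends generators $\mu(-,b_1,\dots,b_k)$ with all $b_i\in H_n$ to the corresponding generators. Note, however, that $H_n$ is not a priori free, so I would instead use Kähler differentials / the universal property: $U_{\mathfrak{M}}(H_n)$ is generated by such elements and maps onto the subalgebra $S$ of $U_{\mathfrak{M}}(F_n)$ they generate; injectivity will be addressed simultaneously with freeness.

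Third, the key step: the free generating set $X$ of $U_{\mathfrak{M}}(F_n)$ splits as $X=X'\sqcup X''$, where $X'$ consists of generators with no argument equal to $y$, and $X''$ those with at least one argument $y$. Then $S$ is the free subalgebra $\k\langle X'\rangle$, and $\k\langle X\rangle$ is a free left $\k\langle X'\rangle$-module with basis $1$ together with all words beginning with a letter from $X''$. This gives freeness over $S$, and I will show $U_{\mathfrak{M}}(H_n)\to S$ is an isomorphism by comparing with the description of $U_{\mathfrak{M}}(H_n)$ via the presentation of $U$ by generators $\mu(-,h_1,\ldots)$, $h_i\in H_n$, modulo relations expressing how products $h_ih_j$ inside arguments are rewritten; the Gröbner basis statement shows these relations are exactly those holding in $U_{\mathfrak{M}}(F_n)$ restricted to $H_n$-arguments, because rewriting a normal form never creates an argument $y$ from arguments in $H_n$ (as $H_n$ is an ideal-like subalgebra closed under products, and $y$ is never a product).

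The main obstacle is precisely this last injectivity/compatibility step: one must make sure that substituting products of elements of $H_n$ (which are not basis monomials in a free sense, since $H_n$ is not yet known to be free) into the generators of $U_{\mathfrak{M}}(H_n)$ does not produce relations beyond those visible in $U_{\mathfrak{M}}(F_n)$. I would handle it by working with the filtration by the $y$-degree and the PBW-type normal forms from the shuffle operad Gröbner basis, showing both sides have the same Hilbert series in each multidegree; this suffices since we already have a surjection.
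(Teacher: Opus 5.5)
\textbf{Your key step fails.} The subalgebra $S$ generated by the $\alpha_h,\beta_h$ with $h\in H_n$ is \emph{not} $\k\langle X'\rangle$. Your justification, that rewriting never creates the argument $y$ from arguments in $H_n$, is wrong. An element $h=(c_1\circ c_2)\circ(c_3\circ c_4)$ lies in $H_n$ even when some $c_i=y$. The defining relation expresses $\beta_h$, which is exactly the kind of generator eliminated in Proposition~\ref{prop:UEfree}, through words $\beta_{c_i}\alpha_{c_j\circ c_k}\alpha_{c_l}$, and these words then contain $\alpha_y$ or $\beta_y$.

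Concretely, take $n=0$ and substitute $y$ for $a_1,\ldots,a_4$. This gives $24\beta_{(y\circ y)\circ(y\circ y)}=96\beta_y\alpha_{y\circ y}\alpha_y$, so $\beta_y\alpha_{y\circ y}\alpha_y\in S$. In fact $S$ is not a free factor of $U_{\mathfrak{M}}(F_0)$ for any choice of generators. $S$ vanishes in degree $1$, so a decomposition $U_{\mathfrak{M}}(F_0)=S*T$ would force $\alpha_y,\beta_y\in T$. The nonzero element $\beta_y\alpha_{y\circ y}\alpha_y$ would then lie in the summand $T\otimes\bar S\otimes T$, which meets $S$ trivially. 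So your splitting only gives freeness over the proper subalgebra $\k\langle X'\rangle\subsetneq S$.

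Two further points are unsupported. First, Proposition~\ref{prop:UEfree} does not give freeness of $U_{\mathfrak{M}}(H_n)$, because $H_n$ is not yet known to be free. The paper deduces that freeness only afterwards, from the present statement via \cite[Th.~6.6]{MR800091}. Second, your Hilbert series comparison for injectivity presupposes a workable presentation of $U_{\mathfrak{M}}(H_n)$, and producing one is the real content of the proof.

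\textbf{The missing idea} is to abandon the free generators of Proposition~\ref{prop:UEfree} and change the ordering. The paper presents $U_{\mathfrak{M}}(F_n)$ by all letters $\alpha_b,\beta_b$, with $b$ running over a homogeneous basis $\mathsf{B}_n$ that contains $y,x_1,\ldots,x_n$ and is closed under $\circ$. This is possible because the identity only relates elements whose top operation is $\bullet$. It then uses the degree-lexicographic order in which every letter with subscript in $\mathsf{B}_n\setminus\{y\}$ is smaller than $\alpha_y<\beta_y$.

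With this order, the leading terms are the length-three words $\beta_b\alpha_{c\circ d}\alpha_e$ rather than single letters $\beta_{(\ldots)}$. Each begins with a $\beta$-letter followed by two $\alpha$-letters, so they have no overlaps and the relations form a Gr\"obner basis. Whenever $y$ is among the substituted elements, the leading word begins with $\beta_y$. Consequently:
\begin{itemize}
\item the relations with $y$-free substitutions form a Gr\"obner basis of $U_{\mathfrak{M}}(H_n)$, which settles injectivity at once;
\item every normal form factors uniquely as $w'w''$, with $w'$ a $y$-free normal form and $w''$ either empty or beginning with $\alpha_y$ or $\beta_y$;
\item the words $w''$ form a free basis of the left module.
\end{itemize}
This basis is not the set of words in a complementary set of generators, which is consistent with $S$ not being a free factor.
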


\begin{proof}
Let us choose a homogeneous basis $\mathsf{B}_n$ of $F_n$ for which all generators $y,x_1,\ldots,x_n$ are basis elements, and, additionally, if $b$ and $c$ are elements of $\mathsf{B}_n$, then $b\circ c=c\circ b$ is also an element of $\mathsf{B}_n$. To see that the latter property may be assured, we note that the defining identity of our variety is a linear dependency between elements whose top level operation is $\bullet$; therefore, all elements of the form $b\circ c$ are linearly independent from other elements and between each other as long as $b$ and $c$ are taken from the basis, and we may include all such elements when constructing a basis inductively degree by degree. By the virtue of our construction, the set $\mathsf{B}_n$ contains $y$ as an element, and we denote $\mathsf{B}_n':=\mathsf{B}_n\setminus\{y\}$; clearly, $\mathsf{B}_n'$ is a homogeneous basis of the algebra $H_n$.

The universal multiplicative enveloping algebra $U_{\mathfrak{M}}(F_n)$ is generated by the (countable) set 
 \[
X_n=\{\alpha_b, \beta_b \colon b\in \mathsf{B}_n\}. 
 \]
Relations between these generators are obtained as follows: we take the defining identity
 \[
\sum_{\sigma\in S_5} a_{\sigma(1)}\bullet((a_{\sigma(2)}\circ a_{\sigma(3)})\circ (a_{\sigma(4)}\circ a_{\sigma(5)}))=0
 \]
of our operad, declare one of the arguments (say, $a_5$, since the identity is symmetric in its arguments) ``special'', and substitute instead of the other arguments arbitrary elements of $F_n$ (say, $b_1$, $b_2$, $b_3$, $b_4$); this relation then becomes a relation of $U_\mathfrak{M}(F_n)$, if we write down what operations are applied to the special argument. For instance, 
 \[
a_{1}\bullet((a_{2}\circ a_{3})\circ (a_{4}\circ a_{5})) \leadsto   \beta_{b_1}\alpha_{b_2\circ b_3}\alpha_{b_4}   
 \]
and 
 \[
a_{5}\bullet ((a_{1}\circ a_{3})\circ (a_{2}\circ a_{4})) \leadsto -\beta_{(b_{1}\circ b_{3})\circ (b_{2}\circ b_{4})}.
 \]
 
Let us consider the total well ordering of $X_n$ defined as follows. We first choose a total well ordering of the set
 \[
X_n'=\{\alpha_b, \beta_b \colon b\in \mathsf{B}_n'\}, 
 \]
and then declare that each element from $X_n'$ is less than $\alpha_y$, which in turn is less than $\beta_y$. We then consider the degree-lexicographic ordering of monomials in the free associative algebra generated by $X_n$.
 
Note that if in the monomial
 \[
a_{\sigma(1)}\bullet((a_{\sigma(2)}\circ a_{\sigma(3)})\circ (a_{\sigma(4)}\circ a_{\sigma(5)}))
 \]
we have $\sigma(1)=5$, this monomial corresponds to $\beta_{(b_{i_1}\circ b_{i_3})\circ (b_{i_2}\circ b_{i_4})}$ for some subscripts $i_1,i_2,i_3,i_4$, and if we have $\sigma(i)=5$ for some $i\ne 1$, this monomial corresponds to $\beta_{b_{i_1}}\alpha_{b_{i_2}\circ b_{i_3}}\alpha_{b_{i_4}}$ for some subscripts $i_1,i_2,i_3,i_4$. Thus, for the ordering that we defined, one of the latter monomials
 \[
\beta_{b_{i_1}}\alpha_{b_{i_2}\circ b_{i_3}}\alpha_{b_{i_4}} 
 \]
is the leading term of the corresponding relation. Moreover, if at least one of the elements $b_{i_1},b_{i_2},b_{i_3},b_{i_4}$ is equal to $y$, then our definition of the ordering implies that the leading monomial is of the form given above with $b_{i_1}=y$. Let us note that these leading terms do not form any overlaps with each other: each of them has length three, begins with a $\beta$-letter, and its last two letters are $\alpha$-letters, and hence no proper suffix of one of them can be a proper prefix of another. Hence our relations form a Gr\"obner basis $G_n$ of relations of $U_\mathfrak{M}(F_n)$. Let us denote by $N_n$ the set of normal forms with respect to $G_n$ in the free associative algebra generated by $X_n$; those normal forms constitute a vector space basis of $U_{\mathfrak{M}}(F_n)$. 
 
Note that the universal multiplicative enveloping algebra $U_\mathfrak{M}(H_n)$ is generated by the set $X_n'$. Moreover, the subset of our relations of $U_{\mathfrak{M}}(F_n)$, where we take all $b_1,b_2,b_3,b_4$ to be different from $y$, is a full set of defining relations of $U_\mathfrak{M}(H_n)$; by the same argument, these relations  form a Gröbner basis $G_n'$ of relations of $U_\mathfrak{M}(H_n)$. Let us denote by $N_n'$ the set of normal forms with respect to $G_n'$  in the free associative algebra generated by $X_n'$; those normal forms constitute a vector space basis of $U_{\mathfrak{M}}(H_n)$.

Finally, let us denote by $N_n^y$ the subset of $N_n$ consisting of the empty word and of all normal forms whose first letter is $\alpha_y$ or $\beta_y$.
Let us show that there is a bijection 
 \[
N_n'\times N_n^y \to N_n
 \]
sending a pair $(w',w'')$ to the product $w'w''$. Indeed, each normal form $w\in N_n$ admits an obvious (and unique) factorization $w=w'w''$, where $w'\in N_n'$ and $w''\in N_n^y$, and conversely, if $w'\in N_n'$ and $w''\in N_n^y$, then $w'w''$ is a normal form, since if it were divisible by a leading term of a relation, this leading term would have its first letter in $X_n'$ and one of the following letters $\alpha_y$ or $\beta_y$, which is impossible.

Passing to vector spaces spanned by normal forms, we obtain an isomorphism
 \[
 \k N_n'\otimes \k N_n^y\cong \k N_n ,
 \]
which immediately implies that $U_{\mathfrak{M}}(F_n)$ is a free left $U_{\mathfrak{M}}(H_n)$-module. 
\end{proof}

\begin{proposition}\label{prop:HnFree}
For every $n\ge 0$, the $\mathfrak{M}$-algebra $H_n$ is free.
\end{proposition}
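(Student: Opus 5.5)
Having established that $U_\mathfrak{M}(F_n)$ is a free left $U_\mathfrak{M}(H_n)$-module, we can now invoke Umirbaev's criterion as formulated in \cite[Sec.~2]{MR4675074}. An algebra $B$ over an operad is free if and only if its module of Kähler differentials $\Omega_B$ is a free $U(B)$-module, provided $U$ of free algebras is free associative. Proposition~\ref{prop:UEfree} supplies this last input, and zero characteristic together with the grading makes the criterion applicable. So the plan is to show that $\Omega_{H_n}$ is a free left $U_\mathfrak{M}(H_n)$-module and then conclude that $H_n$ is free. The route to this mirrors Shirshov's argument for Lie algebras.

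\textbf{Step 1.} Since $H_n$ is an ideal of $F_n$ with zero quotient $\k y$, there is a short exact sequence of $U_\mathfrak{M}(F_n)$-modules relating Kähler differentials and the augmentation:
\[
0\to U_\mathfrak{M}(F_n)\otimes_{U_\mathfrak{M}(H_n)}\Omega_{H_n}\to \Omega_{F_n}\to U_\mathfrak{M}(F_n)\otimes \k y\to 0 .
\]
Its middle term is $\Omega_{F_n}$, which is free on $\d y,\d x_1,\ldots,\d x_n$. I would obtain this sequence by restricting the universal derivation along $H_n\hookrightarrow F_n$, using flatness from the previous proposition to ensure injectivity on the left. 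A more hands-on alternative is to compute $\Omega_{H_n}$ directly from a presentation: $H_n$ is generated by the elements of $\mathsf{B}_n'$, and in low degrees these are $x_i$ together with $y$-involving products such as $y x_i$, $x_i y$ and $y\circ y$.

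\textbf{Step 2.} Deduce that $U_\mathfrak{M}(F_n)\otimes_{U_\mathfrak{M}(H_n)}\Omega_{H_n}$ is projective, hence free, over $U_\mathfrak{M}(F_n)$. Here I use that graded projective modules over a connected graded free associative algebra are free. The sequence splits because the right-hand term is free.

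\textbf{Step 3.} Descend freeness from $U(F_n)$ to $U(H_n)$. Because $U(F_n)\cong U(H_n)\otimes \k N_n^y$ as a right-hand decomposition, restricting the induced module back to $U(H_n)$ gives $\Omega_{H_n}\otimes \k N_n^y$ as a direct sum of copies of $\Omega_{H_n}$. This sum is free over $U(H_n)$, the free associative algebra from Proposition~\ref{prop:UEfree}. Hence $\Omega_{H_n}$ is a graded direct summand of a free module, so it is projective and therefore free by the connected-graded argument. Freeness of $\Omega_{H_n}$ then gives freeness of $H_n$ by the criterion.

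The main obstacle is Step 1, specifically the injectivity on the left and the exact identification of the induced module. I would attack this by comparing the standard presentations $\Omega_A=U(A)\otimes A/(\text{relations})$ degree by degree, using the normal-form bijection $N_n'\times N_n^y\to N_n$ to see that no collapse occurs. A subtlety in Step 3 is the direction of induction: one needs $U(F_n)$ free as a \emph{right} $U(H_n)$-module for the tensor product, or else one should phrase everything with left modules consistently. I would settle this by checking which side the Kähler module uses in the paper's conventions.
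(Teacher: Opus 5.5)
Your reduction of the freeness of $H_n$ to the freeness of $\Omega_{H_n}$ over $U_{\mathfrak{M}}(H_n)$ is harmless for connected graded algebras; it follows from the same minimal-relation and Euler-formula argument the paper uses. The problem is Step~1, which is both wrong as stated and unproved at its crux.

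\textbf{The right-hand term is wrong.} The cokernel of $U_{\mathfrak{M}}(F_n)\otimes_{U_{\mathfrak{M}}(H_n)}\Omega_{H_n}\to\Omega_{F_n}$ is $\Omega_{F_n}/U_{\mathfrak{M}}(F_n)\d(H_n)$. The class of $\d y$ generates it. Since $y\circ y\in H_n$ and $\d(y\circ y)=2\alpha_y\d y$, that class is killed by $\alpha_y$. So the cokernel is not the free module $U_{\mathfrak{M}}(F_n)\otimes\k y$, and the splitting in Step~2 has nothing to stand on.

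\textbf{Injectivity on the left is the real gap.} It does not follow from the previous proposition. That proposition gives freeness of $U_{\mathfrak{M}}(F_n)$ as a \emph{left} $U_{\mathfrak{M}}(H_n)$-module, whereas $U_{\mathfrak{M}}(F_n)\otimes_{U_{\mathfrak{M}}(H_n)}-$ uses the right structure. On that side the normal-form factorization breaks down: for $b\in\mathsf{B}_n'$, $\beta_y\alpha_{b\circ b}\alpha_b$ is a leading term, so $\beta_y$ times the normal form $\alpha_{b\circ b}\alpha_b$ of $U_{\mathfrak{M}}(H_n)$ is not normal. No flatness statement would suffice anyway. In the Lie case the argument works because $\Omega_{\mathfrak h}$ is the augmentation ideal inside $U(\mathfrak h)$. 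Here $\Omega_{H_n}$ has no such embedding, and even injectivity of $\Omega_{H_n}\to\Omega_{F_n}$ is unknown. Once you had that injectivity, $\Omega_{H_n}$ would be a submodule of the free left $U_{\mathfrak{M}}(H_n)$-module $\Omega_{F_n}$, and with Cohn's theorem you would be done. So this is where all the content lies, and your plan to compare presentations degree by degree does not address it.

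\textbf{Step~3 has further problems.} Restricting an induced module to $U_{\mathfrak{M}}(H_n)$ is not in general a direct sum of copies of $\Omega_{H_n}$, because left multiplication mixes the summands. Also, Proposition~\ref{prop:UEfree} applies only to free $\mathfrak{M}$-algebras, so invoking it for $U_{\mathfrak{M}}(H_n)$ is circular.

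\textbf{How the paper avoids this.} The paper never touches $\Omega_{H_n}$.
\begin{enumerate}
\item It takes a minimal generating set $Y_n\subset\mathsf{B}_n'$ of $H_n$ and a relation $f(h_1,\ldots,h_m)=0$ of minimal degree.
\item It applies $\d$ inside $\Omega_{F_n}$, giving a $U_{\mathfrak{M}}(H_n)$-dependency among $\d h_1,\ldots,\d h_m$; minimality of $\deg f$ makes it nontrivial.
\item It views $\Omega_{F_n}$ as a free left $U_{\mathfrak{M}}(H_n)$-module via the previous proposition.
\item It uses Cohn's theorem to show $U_{\mathfrak{M}}(H_n)$ is free associative, so submodules of free modules are free. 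Hence some $\d h_m$ is a combination of the others.
\item The Euler formula then puts $h_m$ in the subalgebra generated by the remaining generators, contradicting minimality.
\end{enumerate}
This route uses only the left freeness you already have.
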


\begin{proof}
The argument proving our assertion is in fact completely analogous to the proof of \cite[Lemma 5]{MR1302528}. For the sake of completeness, we give a detailed proof. 

Let $Y_n\subset \mathsf{B}_n'$ be a subset of the basis of the algebra $H_n$ which is a minimal system of generators of that algebra. Suppose that this system of generators is not free, so that there exist a positive integer $m$, elements $h_1,\ldots,h_m\in Y_n$, and an element $f\in F_\mathfrak{M}(z_1,\ldots,z_m)$ for which
 \[
f(h_1,\ldots,h_m)=0. 
 \]
Moreover, we may choose $f$ to be homogeneous of the smallest possible degree. Let us apply to this relation the universal derivation
 \[
\d\colon F_n\to \Omega^1_{\mathfrak{M}}(F_n), 
 \]
obtaining
 \[
\sum_{k=1}^m \frac{\partial f}{\partial z_k}|_{z_1\leftarrow h_1,\ldots,z_m\leftarrow h_m }\d(h_k)=0.
 \]
Since $f\ne 0$, at least one of the elements $\frac{\partial f}{\partial z_k}$ is nonzero; moreover, the degree of $\frac{\partial f}{\partial z_k}$ is less than the degree of $f$, and hence $\frac{\partial f}{\partial z_k}\ne 0$ implies $\frac{\partial f}{\partial z_k}|_{z_1\leftarrow h_1,\ldots,z_m\leftarrow h_m }\ne 0$ due to our assumption on the minimality of the degree of $f$. It follows that the elements $\d(h_1)$, \ldots, $\d(h_m)$ of the $U_{\mathfrak{M}}(H_n)$-module $\Omega^1_{\mathfrak{M}}(F_n)$ are linearly dependent.  

Since $U_{\mathfrak{M}}(F_n)$ is free as a left $U_{\mathfrak{M}}(H_n)$-module, the module $\Omega^1_{\mathfrak{M}}(F_n)$, which is manifestly free as a left $U_{\mathfrak{M}}(F_n)$-module, is also free as a left $U_{\mathfrak{M}}(H_n)$-module.

Moreover, since the associative algebra $U_{\mathfrak{M}}(F_n)$ is free as an associative algebra, we may use \cite[Th.~6.6]{MR800091} to conclude that freeness of $U_{\mathfrak{M}}(F_n)$ as a $U_{\mathfrak{M}}(H_n)$-module implies freeness of $U_{\mathfrak{M}}(H_n)$  as an associative algebra. (To be precise, \emph{op. cit.} requires $U_{\mathfrak{M}}(H_n)$ to be homogeneous; to ensure that, we may consider the grading of $U_{\mathfrak{M}}(F_n)$ induced from the natural grading of $F_n$.) For free algebras, submodules of free modules are free; it follows that one of the elements $\d(h_1)$, \ldots, $\d(h_m)$ is a $U_{\mathfrak{M}}(H_n)$-linear combination of the others. Without loss of generality, we have 
 \[
\d(h_m)=\sum_{k=1}^{m-1}c_k\d(h_k) 
 \]
with $c_k\in U_{\mathfrak{M}}(H_n)$,  
implying, for each $t\in\{x_1,\ldots,x_n,y\}$,
 \[
\frac{\partial h_m}{\partial t}=\sum_{k=1}^{m-1}c_k\frac{\partial h_k}{\partial t}.
 \]
Using the ``Euler formula''
 \[
\sum_{t\in\{x_1,\ldots,x_n,y\}}\frac{\partial h}{\partial t}t=\deg(h)h,
 \]
valid for all $h\in F_n$, we see that 
 \[
\deg(h_m)h_m=\sum_{k=1}^{m-1}\deg(h_k)c_kh_k. 
 \]
By degree reasons, each $c_k$ only involves elements of $H_n$ of degree strictly smaller than $\deg(h_m)$, hence belongin to the subalgebra generated by $Y_n\setminus\{h_m\}$, contradicting the minimality of $Y_n$ and completing the proof.
\end{proof}

To complete the proof of the main result of the paper, we need to establish one more result. 

\begin{proposition}\label{prop:PBWiff}
A variety $\mathfrak{M}$ whose structure operations all have the same arity has the Poincaré--Birkhoff--Witt property for universal multiplicative enveloping algebras if and only if for the reduced Gr\"obner basis of the corresponding shuffle operad $\calO^f$ with respect to the graded path-lexicographic ordering, each leading term is a left comb. 
\end{proposition}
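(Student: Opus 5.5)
We recall the definition from \cite{MR4381941}: the PBW property for universal multiplicative enveloping algebras means that there is a functor $\Phi$ from vector spaces to vector spaces such that for every $\mathfrak{M}$-algebra $A$ the enveloping algebra $U_\mathfrak{M}(A)$ admits a filtration whose associated graded is isomorphic to $U_{\mathfrak{M}}(A_{\text{ab}})\cong \Phi(A)$, naturally in $A$. Here $A_{\text{ab}}$ is the same vector space with zero operations. The main theorem of \cite{MR4381941} gives an operadic form of this condition. Namely, the PBW property holds if and only if the operad $\calO$, viewed as a right module over itself, is free (equivalently, $\calO$ viewed as a left $\calO$-module via the unary part at the distinguished leaf is free). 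More concretely, the relevant object is the left module $\calO'$ of ``derivatives'' (the operad with one marked input, i.e.\ $\partial\calO$) over $\calO$, and the condition is that $\partial\calO$ is free as a right $\calO$-module on its inputs other than the marked one. The plan is to translate this freeness into a statement about shuffle tree monomials, in the same way as \cite[Lemma~4.2]{MR4675074} translates freeness of $U_\mathfrak{M}(A)$ as an associative algebra.

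\textbf{Step 1: set up a basis of $\partial\calO$ from normal forms.} In the shuffle world, elements of $\partial\calO(n)$ correspond to elements of $\calO^f(n+1)$ where the marked input is the minimal leaf $1$. Fix the graded path-lexicographic ordering and its reduced Gröbner basis $G$. Then normal tree monomials, i.e.\ those not divisible by leading terms of $G$, give a basis of $\calO^f$. In each normal tree monomial, consider the path from the root to leaf $1$. Since all operations have the same arity, the vertices on this path together with the branches hanging off them decompose the monomial. The decomposition consists of a ``spine'' (a left comb in the path-lexicographic sense, since leaf $1$ is always the leftmost leaf in shuffle trees) with arbitrary trees grafted onto the non-spine inputs. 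Right $\calO$-module freeness of $\partial\calO$ amounts to this: the normal monomials are exactly \emph{all} graftings of normal trees onto a set of ``spine'' normal monomials whose non-marked inputs are leaves.

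\textbf{Step 2: ``if'' direction.} Suppose every leading term in $G$ is a left comb, meaning all its internal vertices lie on the path from the root to leaf $1$. Then divisibility of a tree monomial by a leading term only involves consecutive vertices on a single left-comb path. Grafting normal trees onto the non-spine inputs of a normal spine therefore cannot create a divisor. A divisor lying inside a grafted tree would contradict normality of that tree. A divisor using the spine must be a left comb starting on the spine, hence contained in the spine, contradicting normality of the spine. The only other possibility is a left comb starting at the root of a grafted tree, which lies in that tree. Conversely, every normal monomial decomposes uniquely this way. This yields freeness of $\partial\calO$ as a right $\calO$-module, hence PBW.

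\textbf{Step 3: ``only if'' direction.} Suppose some leading term $\ell$ of $G$ is not a left comb. Take $\ell$ minimal among such with respect to the ordering. Then $\ell$ has an internal vertex $v$ off the left spine. Cutting $\ell$ at the inputs of the spine gives a spine monomial $s$ and grafted trees. Using reducedness of $G$, I expect $s$ and each grafted tree to be normal. Hence $\ell$ is a grafting of normal pieces that is not normal, which gives a nontrivial relation among the putative free generators. The delicate point is to show that this truly breaks freeness, rather than just this particular basis. For that I would compare dimensions, that is, generating series. A free right module would have Hilbert series equal to the composition of the series of its generators with that of $\calO$. The normal-form count gives strictly fewer elements in the arity of $\ell$, while agreeing in lower arities; the minimality of $\ell$ ensures agreement below. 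The generators are determined by lower arities via the PBW functor, so this yields a contradiction. This dimension comparison is the step I expect to be the main obstacle, specifically proving that the generator space forced by PBW coincides with the span of normal spines. I would try to get it from the fact that PBW generators can be computed from $A_{\text{ab}}$, where everything off the spine vanishes.
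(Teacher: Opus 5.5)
Your Steps~1 and~2 are fine. Your first formulation is vacuous, since every operad is free as a right module over itself, but the criterion you then state, freeness of $\partial\calO$ as a right $\calO$-module, is \cite[Th.~3.4]{MR4381941}. Step~2 is a correct direct proof of \cite[Th.~5.16]{MR4381941}, which the paper simply cites. The gap is in Step~3, exactly where you flag it, and your proposed way around it does not work.

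If $\partial\calO\cong X\circ\calO$, then $X$ is forced to be the indecomposables $\partial\calO/(\text{decomposables})$ in each arity. It is not determined by lower arities, so agreement of counts below the arity $n$ of $\ell$ says nothing in arity $n$. Your count only gives $\dim X(n)=\#\{\text{normal left combs}\}-\#\{\text{non-normal graftings of normal pieces}\}$ in arity $n$, which is no contradiction. What you need is that the normal left combs are linearly independent modulo decomposables.

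This cannot be proved without the choice of ordering and the equal-arity hypothesis, and your Step~3 uses neither. For the associative operad with the \emph{reverse} graded path-lexicographic ordering, the reduced Gr\"obner basis contains $a_1(a_2a_3)-(a_1a_2)a_3$. Its leading term $a_1(a_2a_3)$ is not a left comb, yet associative algebras do have the PBW property. There the normal left comb $(a_1a_2)a_3$ equals the decomposable element $a_1(a_2a_3)$, so an ordering-blind argument would reach a false conclusion.

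The missing ingredient is the observation the paper uses. When all generators have the same arity, all tree monomials of a given arity have the same number of vertices. In the graded path-lexicographic ordering, every left comb is then larger than every non-left comb, since its path from the root to leaf $1$ is as long as possible. Hence any element of the ideal of relations in which some left comb occurs has a left comb as its leading term.

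Consequently, suppose a normal left comb were a combination of other normal left combs plus decomposables; the decomposables lift to combinations of non-left combs in the free operad. The difference would be an element of the ideal whose leading term is one of these normal left combs, which is absurd. So the images of the normal left combs form a basis of the indecomposables. If $\partial\calO\cong X\circ\calO$, then $\dim X(k)$ is the number of normal left combs in every arity $k$.

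With this in hand your dimension count does give the contradiction: the graftings of normal spines with normal trees include $\ell$, which is not normal. Your reducedness argument for the normality of the pieces is correct, and minimality of $\ell$ is not needed. Equivalently, as in the paper, the Gr\"obner basis element with leading term $\ell$ contains no left combs at all, so it is a nontrivial dependency among free generators.
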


\begin{proof}
According to \cite[Th.~3.4]{MR4381941}, a variety $\mathfrak{M}$ has the Poincaré--Birkhoff--Witt property for universal multiplicative enveloping algebras if and only if the species derivative $\partial(\calO)$ is free as a right $\calO$-module. 

Furthermore, it is established in  \cite[Th.~5.16]{MR4381941} that if for the reduced Gr\"obner basis of the corresponding shuffle operad $\calO^f$ with respect to the graded path-lexicographic ordering, each leading term is a left comb, then $\partial(\calO)$ is free as a right $\calO$-module, and hence $\mathfrak{M}$ has the Poincaré--Birkhoff--Witt property for universal multiplicative enveloping algebras. 

Suppose now that $\mathfrak{M}$ has the Poincaré--Birkhoff--Witt property for universal multiplicative enveloping algebras, which we already know to be equivalent to the freeness of the right $\calO$-module $\partial(\calO)$. 

We shall use the following observation twice: if the leading term with respect to the graded path-lexicographic ordering of some element of the free operad generated by several operations of the same arity is not a left comb, then none of the monomials appearing in this element is a left comb. Indeed, if that were the case, there would exist a non-leading monomial that is manifestly larger than the leading monomial with respect to the graded path-lexicographic ordering. 

Let us first note that it is always true that a minimal system of generators of $\partial(\calO)$ as a right $\calO$-module consists of those normal forms with respect to the graded path-lexicographic ordering that are left combs. Indeed, each normal form for $\calO$ is obtained by right module action avoiding the minimal leaf from the left comb normal forms, so these normal forms are a generating system. Suppose that this generating system of $\partial(\calO)$ is not minimal, that is one of these normal forms, say $m$ is equal to a right $\calO$-module combination of the others which avoids the minimal leaf. The element $f$ that is equal to the difference of $m$ and the said right $\calO$-module combination belongs to the ideal of relations of $\calO$, and by the above observation, $m$ must be its leading term. But then $m$ cannot be a normal form with respect to a Gröbner basis, a contradiction. 

To conclude, we suppose that our Gröbner basis contains an element $g$ whose leading term is not a left comb. By the above observation, none of the terms appearing in $g$ is a left comb, and hence $g$ is a nontrivial right $\calO$-module dependency between the minimal generators of the module $\partial(\calO)$, which we assumed free. The contradiction completes the proof.  
\end{proof}

\begin{proof}[Proof of Theorem \ref{th:main}]
It is established in  \cite[Lemma 4.6]{MR4675074} that if the $\mathfrak{M}$-algebra $H_n$ is free for each $n\ge 0$, the variety $\mathfrak{M}$ has the Nielsen--Schreier property, so Proposition \ref{prop:HnFree} implies that our variety is Nielsen--Schreier. 

We shall use Proposition \ref{prop:PBWiff} to prove that our variety does not have the Poincaré--Birkhoff--Witt property for universal multiplicative enveloping algebras.  In our case, the leading term of the defining relation of the corresponding shuffle operad is 
 \[
\begin{picture}(40,80)
\put(40,-2){\circle*{5}} \put(58,18){\circle{1}}
\put(-16,58){\circle{1}} \put(16,58){\circle{1}}
\put(24,58){\circle{1}} \put(56,58){\circle{1}}
\put(20,20){\circle{5}} 
\put(0,40){\circle{5}} \put(40,40){\circle{5}} 
\put(40,0){\line(-1,1){18}} \put(40,0){\line(1,1){18}}
\put(20,22){\line(-1,1){18}} \put(20,22){\line(1,1){18}}
\put(0,42){\line(-1,1){16}} \put(0,42){\line(1,1){16}}
\put(40,42){\line(-1,1){16}} \put(40,42){\line(1,1){16}}
\put(55,22){\text{5}} \put(-20,62){\text{1}} \put(12,62){\text{2\textbf{}}}
\put(22,62){\text{3}} \put(54,62){\text{4}} 
\end{picture} 
 \]
This element does not form any self-overlaps, and hence the given defining relation forms a Gröbner basis. Its leading term is manifestly not a left comb, and therefore the variety $\mathfrak{M}$
does not have the Poincaré--Birkhoff--Witt property for universal multiplicative enveloping algebras.  
\end{proof}

\section{Conclusion}

We reiterate that, according to \cite[Lemma 4.3]{MR4675074}, every variety of algebras to which the criterion of \cite{MR4675074} applies has the Poincaré--Birkhoff--Witt property for universal multiplicative enveloping algebras, and hence our example is outside the scope of that criterion. An almost identical argument proves that any identity of the form
 \[
x\bullet(m_{1}\circ m_{2})=0, 
 \]
where $m_1$ and $m_2$ are monomials in the variable $x$ obtained by iterations of the symmetrized product $\circ$, defines a Nielsen--Schreier variety that does not have the Poincaré--Birkhoff--Witt property for universal multiplicative enveloping algebras. Indeed, the leading term again has the minimal leaf adjacent to the root for the reverse graded  path-lexicographic ordering, while for the graded path-lexicographic ordering the leading term is not a left comb; moreover, the defining monomial has no self-overlaps because the root is the only vertex labelled by $\bullet$.

Thus, one may be hopeful that the Nielsen--Schreier property over a field of zero characteristic is in fact equivalent to freeness of universal multiplicative enveloping algebras of free algebras.

\section*{Funding }
This research was funded by the Science Committee of the Ministry of Science and Higher Education of the Republic of Kazakhstan (Grant No. BR 28713025). 

\section*{Acknowledgements} 
The first author wishes to acknowledge motivating discussions with Ivan Shestakov who asked him whether there is a relationship between the PBW property for universal multiplicative enveloping algebras and the Nielsen--Schreier property, and useful conversations with Ualbai Umirbaev during early stages of work on this paper.

\printbibliography

\end{document}